\theoremstyle{definition}
\newtheorem{theorem}{Theorem}[section]
\newtheorem{proposition}[theorem]{Proposition}
\newtheorem{lemma}[theorem]{Lemma}
\newtheorem{definition}[theorem]{Definition}
\newtheorem{problem}[theorem]{Problem}
\renewcommand{\section}{%
  \@startsection{section}%
   {1}%
   {\z@}%
   {-3.5ex \@plus -1ex \@minus -.2ex}%
   {2.3ex \@plus.2ex}%
   {\sc\normalsize \center }%
}%
\renewcommand{\subsection}{%
  \@startsection{subsection}%
   {1}%
   {\z@}%
   {-3.5ex \@plus -1ex \@minus -.2ex}%
   {2.3ex \@plus.2ex}%
   {\sc\normalsize \center}%
}%
\renewcommand{\subsubsection}{%
  \@startsection{subsubsection}%
   {1}%
   {\z@}%
   {-3.5ex \@plus -1ex \@minus -.2ex}%
   {2.3ex \@plus.2ex}%
   {\sc\normalsize \center}%
}%
\renewcommand{\epsilon}{\varepsilon}    
\begin{document}

\title{{\bf \sc On tensors of factorizable quantum channels with the completely depolarizing channel}}
\author{Yuki Ueda}
\date{}

\address{
Yuki Ueda\footnote{This research was completed during the author's affiliation with Kyoto University and Hokkaido University from 2016 to 2018.}:
Department of Mathematics, Hokkaido University of Education, 9, Hokumon-cho, Asahikawa, Hokkaido, 070-8621, Japan}
\email{ueda.yuki@a.hokkyodai.ac.jp (current)}

\maketitle

\begin{abstract}
In this paper, we obtain results for factorizability of quantum channels. Firstly, we prove that if a tensor $T\otimes S_k$ of a quantum channel $T$ on $M_n(\mathbb{C})$ with the completely depolarizing channel $S_k$ is written as a convex combination of automorphisms on the matrix algebra $M_n(\mathbb{C})\otimes M_k(\mathbb{C})$ with rational coefficients, then the quantum channel $T$ has an exact factorization through some matrix algebra with the normalized trace. Next, we prove that if a quantum channel has an exact factorization through a finite dimensional von Neumann algebra with a convex combination of normal faithful tracial states with rational coefficients, then it also has an exact factorization through some matrix algebra with the normalized trace.
\end{abstract}

{\it keywords: Markov maps, factorizable quantum channels, free group von Neumann algebras, completely depolarizing channels}

\section*{Note}
This version does not differ significantly from the one initially posted in 2018. However, we would like to inform readers that Problems 1.1 and 1.3 below have already been completely solved in the published version.

\section{Introduction}
In \cite{AD}, Anantharaman-Delaroche defined the class of factorizable Markov maps to study the noncommutative analogue of ergodic theory. After the works of \cite{AD}, Haagerup and Musat proved in \cite[Theorem 6.1]{HM11} that every non-factorizable quantum channel on $M_n(\mathbb{C})$ ($n\ge 3$) fails the asymptotic quantum Birkhoff conjecture which was raised by Smolin, Verstraete and Winter (see \cite{SVW}) as one of the most important problems in quantum information theory. In \cite{HM11, HM15}, they also approached the Connes embedding problem by using factorizable quantum channels, in particular, tensors of factorizable quantum channels with the completely depolarizing channel. In this paper, we focus on the relation between the factorizability of quantum channels and the property of tensors of factorizable quantum channels with the completely depolarizing channel. Haagerup and Musat proved in \cite[Corollary 3.5]{HM15} that if a quantum channel $T$ on $M_n(\mathbb{C})$ has an exact factorization through a tracial $W^\ast$-probability space $(M_n(\mathbb{C})\otimes M_k(\mathbb{C}),\tau_n\otimes \tau_k)$, that is, there exists a unitary matrix $u$ in $M_n(\mathbb{C})\otimes M_k(\mathbb{C})$ such that  $T$ is written in the following form
\begin{equation}\label{EF}
Tx=(id_n\otimes \tau_k)(u^\ast (x\otimes 1_k)u), \qquad x\in M_n(\mathbb{C}),
\end{equation}
then $T\otimes S_k \in$ conv(Aut($M_n(\mathbb{C})\otimes M_k(\mathbb{C})$)). We raise the natural problem of whether the converse claim of the statement is true or not (see below).
\begin{problem}\label{prob}
Let $n$ be a positive integer and $T$ a quantum channel on $M_n(\mathbb{C})$. Is it true that the following properties are equivalent?
\begin{enumerate}[\rm (1)]
\item There exists a positive integer $k$ such that $T$ has an exact factorization through $(M_n(\mathbb{C})\otimes M_k(\mathbb{C}),\tau_n\otimes \tau_k)$.
\item There exists a positive integer $k$ such that $T\otimes S_k\in$conv(Aut($M_n(\mathbb{C})\otimes M_k(\mathbb{C})$)).
\end{enumerate}
The implication $(1)\Rightarrow (2)$ is true, but the implication $(2)\Rightarrow (1)$ is unknown.
\end{problem}
We obtain a partial result for Problem \ref{prob} as follows.
\begin{theorem}\label{TM11}
Let $T$ be a quantum channel on $M_n(\mathbb{C})$. If there exists a positive integer $k$ such that $T\otimes S_k=\sum_{i=1}^{d(k)} c_i \text{ad}(u_i)\in$conv(Aut($M_n(\mathbb{C})\otimes M_k(\mathbb{C})$)) for some positive integer $d(k)$, unitary matrices $u_1,\cdots ,u_{d(k)}\in\mathcal{U}(M_n(\mathbb{C})\otimes M_k(\mathbb{C}))$ and positive rational numbers $c_1,\cdots ,c_{d(k)}$ with $\sum_{i=1}^{d(k)}c_i=1$, then there exists a positive integer $L$ such that $T$ has an exact factorization through $(M_n(\mathbb{C})\otimes M_L(\mathbb{C}),\tau_n\otimes \tau_L)$.
\end{theorem}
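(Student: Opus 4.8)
The plan is to first convert the hypothesis on $T\otimes S_k$ into a statement about $T$ itself, and then to turn the resulting rational convex combination into a single conjugation over a larger matrix algebra. For the first step I would use that the completely depolarizing channel satisfies $S_k(1_k)=1_k$. Evaluating $T\otimes S_k=\sum_{i=1}^{d(k)}c_i\,\mathrm{ad}(u_i)$ on elements $x\otimes 1_k$ with $x\in M_n(\mathbb{C})$ gives $Tx\otimes 1_k=\sum_{i=1}^{d(k)}c_i\,u_i^\ast(x\otimes 1_k)u_i$, and then applying $id_n\otimes\tau_k$ to both sides yields
\[
Tx=\sum_{i=1}^{d(k)}c_i\,(id_n\otimes\tau_k)\bigl(u_i^\ast(x\otimes 1_k)u_i\bigr),\qquad x\in M_n(\mathbb{C}).
\]
In other words $T=\sum_{i=1}^{d(k)}c_i\Phi_i$ is a convex combination, with the same rational weights $c_i$, of quantum channels $\Phi_i$ each admitting an exact factorization through $(M_n(\mathbb{C})\otimes M_k(\mathbb{C}),\tau_n\otimes\tau_k)$ as in \eqref{EF}, implemented by the unitary $u_i$; this half uses no rationality and is essentially a reading of \cite[Corollary~3.5]{HM15} in reverse.

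For the second step, write $c_i=p_i/q$ with $p_i,q\in\mathbb{N}$ and $\sum_{i=1}^{d(k)}p_i=q$, and put $L:=kq$. Fix inside $M_L(\mathbb{C})$ the block-diagonal subalgebra $\bigoplus_{i=1}^{d(k)}M_{kp_i}(\mathbb{C})$ with minimal central projections $f_1,\dots,f_{d(k)}$, and identify the $i$-th block $M_{kp_i}(\mathbb{C})$ with $M_k(\mathbb{C})\otimes M_{p_i}(\mathbb{C})$. Set
\[
w:=\bigoplus_{i=1}^{d(k)}\bigl(u_i\otimes 1_{p_i}\bigr)\in M_n(\mathbb{C})\otimes M_L(\mathbb{C}),
\]
a unitary. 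The key numerical fact is that for $b$ supported in the $i$-th block one has $\tau_L(b)=(kp_i/L)\,\tau_{kp_i}(b)=c_i\,\tau_{kp_i}(b)$, and moreover $\tau_{kp_i}=\tau_k\otimes\tau_{p_i}$ under the above identification. Decomposing $M_n(\mathbb{C})\otimes M_L(\mathbb{C})$ along the central projections $1_n\otimes f_i$ and computing block by block therefore gives
\[
(id_n\otimes\tau_L)\bigl(w^\ast(x\otimes 1_L)w\bigr)=\sum_{i=1}^{d(k)}c_i\,(id_n\otimes\tau_k)\bigl(u_i^\ast(x\otimes 1_k)u_i\bigr)=Tx,
\]
so $T$ has an exact factorization through $(M_n(\mathbb{C})\otimes M_L(\mathbb{C}),\tau_n\otimes\tau_L)$. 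Equivalently, the first step already exhibits an exact factorization of $T$ through the finite-dimensional von Neumann algebra $\bigoplus_{i=1}^{d(k)}M_k(\mathbb{C})$ equipped with the tracial state that restricts to $c_i\tau_k$ on the $i$-th summand (a convex combination of normal faithful tracial states with rational coefficients), and the construction of $w$ is precisely how the other main result of this paper amplifies such a factorization to one through a matrix algebra with its normalized trace.

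I do not anticipate a substantive obstacle beyond bookkeeping; the one genuinely essential hypothesis is the rationality of the $c_i$. Rationality is exactly what lets the weights $c_i$ be realized as relative block dimensions $kp_i/L$ inside a single matrix algebra carrying its canonical normalized trace, which is what collapses the convex combination into one conjugation. If the $c_i$ were irrational, the same recipe would only factor $T$ through a finite-dimensional (or hyperfinite) tracial von Neumann algebra whose trace is genuinely non-matricial, and there is no apparent route from there down to a finite matrix algebra — this is the gap that leaves the implication $(2)\Rightarrow(1)$ in Problem~\ref{prob} open in general. Minor points to verify along the way are that each $\Phi_i$ is indeed a quantum channel (unital, completely positive, trace preserving), which is immediate, and the elementary identities for restrictions of normalized traces to blocks used in the displayed computation.
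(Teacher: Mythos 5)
Your proposal is correct and follows essentially the same route as the paper: first apply $T\otimes S_k$ to $x\otimes 1_k$ and compose with $id_n\otimes\tau_k$ to write $Tx=\sum_i c_i\,(id_n\otimes\tau_k)(u_i^\ast(x\otimes 1_k)u_i)$, then use the rationality of the $c_i$ to build a block-diagonal unitary repeating each $u_i$ with multiplicity proportional to $c_i$ (the paper's $U=\mathrm{diag}(u_1,\dots,u_1,\dots,u_{d(k)},\dots,u_{d(k)})$ in $M_n(\mathbb{C})\otimes M_k(\mathbb{C})\otimes M_L(\mathbb{C})$ is exactly your $w=\bigoplus_i(u_i\otimes 1_{p_i})$ up to the identification $M_{kq}(\mathbb{C})\cong M_k(\mathbb{C})\otimes M_q(\mathbb{C})$). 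The bookkeeping with the normalized trace on blocks matches the paper's computation, so no further changes are needed.
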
 
Moreover we also raise the following problem for the quantum channels which have an exact factorization through a finite dimensional $W^\ast$-probability space (see below).
\begin{problem}
Let $T$ be a quantum channel on $M_n(\mathbb{C})$. Is it true that if there exists a finite dimensional $W^\ast$-probability space $(\mathcal{N},\phi)$ (i.e. a pair of a finite dimensional von Neumann algebra $\mathcal{N}$ and a normal faithful state $\phi$ on $\mathcal{N}$) such that $T$ has an exact factorization through $(M_n(\mathbb{C})\otimes \mathcal{N},\tau_n\otimes \phi)$ then there exists a positive integer $k$ such that $T$ also has an exact factorization through $(M_n(\mathbb{C})\otimes M_k(\mathbb{C}),\tau_n\otimes \tau_k)$?
\end{problem}
Note that every finite dimensional von Neumann algebra is $\ast$-isomorphic to a direct sum of some matrix algebras. For this problem, we obtain the following partial result.
\begin{theorem}\label{TM13}
Let $T$ be a quantum channel on $M_n(\mathbb{C})$. If for any $\alpha:=(\alpha_1,\cdots ,\alpha_d) \in \mathbb{Q}_+^d$ with $\alpha_1+\cdots +\alpha_d=1$ there exist positive integers $k_1,\cdots ,k_d$ such that $T$ has an exact factorization through $(M_n(\mathbb{C}) \otimes (M_{k_1}(\mathbb{C})\oplus \cdots \oplus M_{k_d}(\mathbb{C})),\tau_n\otimes\tau_\alpha)$, where $\tau_\alpha$ is a normal faithful tracial state on $ M_{k_1}(\mathbb{C})\oplus \cdots \oplus M_{k_d}(\mathbb{C})$ defined by 
\begin{equation}\label{tau}
\tau_{\alpha}(x_1,\cdots ,x_d):=\alpha_1 \tau_{k_1}(x_1)+ \cdots + \alpha_d \tau_{k_d}(x_d)
\end{equation}
for all $(x_1,\cdots ,x_d)\in M_{k_1}(\mathbb{C})\oplus \cdots \oplus M_{k_d}(\mathbb{C})$, then there exists a positive integer $k$ such that $T$ has an exact factorization through $(M_n(\mathbb{C})\otimes M_k(\mathbb{C}),\tau_n\otimes \tau_k)$.
\end{theorem}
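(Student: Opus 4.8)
The plan is to absorb the direct‑sum algebra $M_{k_1}(\mathbb{C})\oplus\cdots\oplus M_{k_d}(\mathbb{C})$ together with the \emph{rational} tracial state $\tau_\alpha$ into a single matrix algebra carrying its normalized trace, and then to transport the factorizing unitary along the induced embedding; this places $T$ in the form \eqref{EF} and hence gives the desired exact factorization through a matrix algebra. First I would apply the hypothesis to one fixed weight vector, for instance $\alpha=(1/d,\cdots,1/d)\in\mathbb{Q}_+^d$ (any $\alpha\in\mathbb{Q}_+^d$ with $\alpha_1+\cdots+\alpha_d=1$ would serve equally well), obtaining positive integers $k_1,\cdots,k_d$ and a unitary $u\in\mathcal{U}\bigl(M_n(\mathbb{C})\otimes(M_{k_1}(\mathbb{C})\oplus\cdots\oplus M_{k_d}(\mathbb{C}))\bigr)$ with $Tx=(id_n\otimes\tau_\alpha)(u^\ast(x\otimes 1)u)$ for all $x\in M_n(\mathbb{C})$, where $1$ is the unit of the direct sum.

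Next I would construct a unital trace‑preserving embedding of the intermediate algebra into a matrix algebra. Write $\alpha_i=p_i/q$ with a common denominator $q$ and positive integers $p_i$, so that $p_1+\cdots+p_d=q$, and set $m:=q\,k_1k_2\cdots k_d$ and $r_i:=p_i\prod_{j\ne i}k_j$. Then each $r_i$ is a positive integer and $\sum_{i=1}^d r_ik_i=\bigl(\prod_j k_j\bigr)\sum_i p_i=m$. Let $\iota\colon M_{k_1}(\mathbb{C})\oplus\cdots\oplus M_{k_d}(\mathbb{C})\to M_m(\mathbb{C})$ be the $\ast$‑homomorphism sending $(x_1,\cdots,x_d)$ to the block‑diagonal matrix in which the block $x_i$ occurs with multiplicity $r_i$. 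It is unital, $\iota(1)=1_m$, and since the unnormalized trace of $\iota(x_1,\cdots,x_d)$ equals $\sum_i r_ik_i\tau_{k_i}(x_i)$ we get
\[
\tau_m\bigl(\iota(x_1,\cdots,x_d)\bigr)=\sum_{i=1}^d\frac{r_ik_i}{m}\,\tau_{k_i}(x_i)=\tau_\alpha(x_1,\cdots,x_d),
\]
that is, $\tau_m\circ\iota=\tau_\alpha$. Consequently $id_n\otimes\iota$ is a unital $\ast$‑homomorphism from $M_n(\mathbb{C})\otimes(M_{k_1}(\mathbb{C})\oplus\cdots\oplus M_{k_d}(\mathbb{C}))$ into $M_n(\mathbb{C})\otimes M_m(\mathbb{C})$, and it satisfies $(id_n\otimes\tau_m)\circ(id_n\otimes\iota)=id_n\otimes\tau_\alpha$.

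Finally I would put $v:=(id_n\otimes\iota)(u)$, which is a unitary in $M_n(\mathbb{C})\otimes M_m(\mathbb{C})$ because $id_n\otimes\iota$ is a unital $\ast$‑homomorphism. Since $\iota(1)=1_m$ we have $x\otimes 1_m=(id_n\otimes\iota)(x\otimes 1)$, so multiplicativity and $\ast$‑preservation of $id_n\otimes\iota$ give $v^\ast(x\otimes 1_m)v=(id_n\otimes\iota)\bigl(u^\ast(x\otimes 1)u\bigr)$; applying $id_n\otimes\tau_m$ and using $(id_n\otimes\tau_m)\circ(id_n\otimes\iota)=id_n\otimes\tau_\alpha$ yields
\[
(id_n\otimes\tau_m)\bigl(v^\ast(x\otimes 1_m)v\bigr)=(id_n\otimes\tau_\alpha)\bigl(u^\ast(x\otimes 1)u\bigr)=Tx,\qquad x\in M_n(\mathbb{C}).
\]
Thus $T$ has an exact factorization through $(M_n(\mathbb{C})\otimes M_m(\mathbb{C}),\tau_n\otimes\tau_m)$, so one may take $k:=m$.

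The argument is short; its only genuinely delicate point is the existence of the embedding $\iota$. A finite‑dimensional von Neumann algebra with a tracial state admits a unital trace‑preserving $\ast$‑homomorphism into some $(M_m(\mathbb{C}),\tau_m)$ exactly when the weights of that trace are rational, since any such homomorphism is determined by multiplicities $r_1,\cdots,r_d$ and forces the weight of the $i$‑th summand to be $r_ik_i/m$. This is where the hypothesis $\alpha\in\mathbb{Q}_+^d$ is essential and cannot be removed by this method, and it is why Theorem \ref{TM13} settles only the rational case of the preceding problem. I note also that, for the approach just sketched, one does not use the full strength of the hypothesis: a single rational weight vector $\alpha$ already suffices.
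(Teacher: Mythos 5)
Your proof is correct. The engine is the same as the paper's: a unital, trace-preserving $\ast$-homomorphism from $(M_{k_1}(\mathbb{C})\oplus\cdots\oplus M_{k_d}(\mathbb{C}),\tau_\alpha)$ into some $(M_m(\mathbb{C}),\tau_m)$, followed by transporting the factorizing unitary along $id_n\otimes\iota$ --- you re-derive inline exactly what the paper isolates as Lemma \ref{Preserv}. The difference is in how the embedding is built. The paper proceeds in three steps: first the case where all summands have equal size $k$ (embedding into $M_{kL}(\mathbb{C})$ with multiplicities $l_i$), then the case $d=2$ with $k_1\neq k_2$ via an intermediate embedding into $M_{k_1k_2}(\mathbb{C})\oplus M_{k_1k_2}(\mathbb{C})$ that equalizes the block sizes, and finally the general case by iterating. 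You instead write down a single one-shot embedding into $M_m(\mathbb{C})$ with $m=q\,k_1\cdots k_d$ and multiplicities $r_i=p_i\prod_{j\neq i}k_j$; the identity $\sum_i r_ik_i=m$ and the computation $r_ik_i/m=\alpha_i$ make it simultaneously unital and $(\tau_\alpha,\tau_m)$-preserving, which collapses the paper's case analysis and induction into one construction (at the cost of a larger, but still perfectly explicit, target dimension). Your closing remarks are also accurate and worth keeping: rationality of the weights is exactly what such an embedding forces, so the method cannot be pushed past $\alpha\in\mathbb{Q}_+^d$, and only a single rational weight vector $\alpha$ is actually consumed by the argument, so the theorem's universally quantified hypothesis is stronger than what the proof (yours or the paper's) needs.
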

In section 2, we set notations and definitions in this paper. In section 3, we recall and discuss for the concepts and basic properties of quantum channels, factorizable quantum channels and completely depolarizing channels. In section 4 and 5, we prove that Theorem \ref{TM11} and Theorem \ref{TM13} hold, respectively.

\section{Notations and Definitions}
In this paper, we use the following notations:
\begin{itemize}
\item $\mathbb{N}:=\{1,2,3,\cdots\}$ and $\mathbb{Q}_+$ is the set of all positive rational numbers.
\item $M_n(\mathbb{C})$ is the set of all $n\times n$ matrices with complex entries.
\item $\mathcal{U}(n)$ is the set of all $n\times n$ unitary matrices with complex entries.
\item $\tau_n$ is the normalized trace on $M_n(\mathbb{C})$, i.e. $\tau_n((x_{ij})_{1\le i,j\le n}):=\frac{x_{11}+\cdots x_{nn}}{n}$.
\item $id_n$ is the identity map on $M_n(\mathbb{C})$.
\item Define ad$(u)(x):=u^\ast x u$ for all $x\in M_n(\mathbb{C})$, $u\in \mathcal{U}(n)$.
\item conv(Aut($M_n(\mathbb{C})$)) is the convex hull of the set Aut$(M_n(\mathbb{C})):=\{\text{ad}(u): u\in\mathcal{U}(n)\}$.
\item $1_\mathcal{M}$ is the unit in von Neumann algebra $\mathcal{M}$, in particular, $1_n:=1_{M_n(\mathbb{C})}$.
\end{itemize}
A pair $(\mathcal{M},\phi)$ is called a {\it $W^\ast$-probability space} if $\mathcal{M}$ is a von Neumann algebra and $\phi$ is a normal faithful state on $\mathcal{M}$. In particular, we call $(\mathcal{M},\phi)$ a {\it tracial} $W^\ast$-probability space when $\phi$ is tracial, that is $\phi(xy)=\phi(yx)$ for all $x,y\in \mathcal{M}$. 

\section{Basic properties of factorizable quantum channels}
In \cite{AD}, Anantharaman-Delaroche considered factorizable Markov maps to prove a noncommutative analogue of Rota's theorem. We first recall the definition of Markov maps on a $W^\ast$-probability space. The concept is a noncommutative analogue of measure-preserving Markov operator on a probability space. 
\begin{definition}\label{Markov}
Let $(\mathcal{M},\phi)$ and $(\mathcal{N},\psi)$ be $W^\ast$-probability spaces. A linear map $T:\mathcal{M}\rightarrow\mathcal{N}$ is called a {\it $(\phi,\psi)$-Markov map} if
\begin{enumerate}[\rm (1)]
\item $T$ is completely positive,
\item $T$ is unital,
\item $T$ is $(\phi,\psi)$-preserving, i.e. $\psi\circ T=\phi$,
\item $T\circ \sigma_t^{\phi}=\sigma_t^{\psi}\circ T$, where $\{\sigma_t^{\phi}\}_{t\in\mathbb{R}}$ denotes the automorphism group of the state $\phi$.
\end{enumerate}
In particular, we call it {\it $\phi$-Markov map} when $(\mathcal{M},\phi)=(\mathcal{N},\psi)$.
\end{definition}
If $(\mathcal{M},\phi)=(\mathcal{N},\psi)=(M_n(\mathbb{C}),\tau_n)$ in Definition \ref{Markov}, the fourth condition is removed since the operator $\sigma_t^{\tau}$ is trivial for any normal faithful tracial states $\tau$ on von Neumann algebras and $t\in\mathbb{R}$, so that a $\tau_n$-Markov map means a unital completely positive trace-preserving map ({\it quantum channel}) on $M_n(\mathbb{C})$. Denote by
\begin{equation}
\mathcal{Q}(n):=\{T:M_n(\mathbb{C})\rightarrow M_n(\mathbb{C}): T \text{ is a quantum channel}\}.
\end{equation}
In \cite[Definition 6.2]{AD}, Anantharaman-Delaroche defined the class of factorizable Markov maps in the following sense.
\begin{definition}\label{factorizable}
A $(\phi,\psi)$-Markov map $T:\mathcal{M}\rightarrow \mathcal{N}$ is called {\it factorizable} if there exists a $W^\ast$-probability space $(\mathcal{L},\chi)$ and $\ast$-monomorphisms $\alpha:\mathcal{M}\rightarrow \mathcal{L}$ and $\beta:\mathcal{N}\rightarrow\mathcal{L}$ such that $\alpha$ is $(\phi,\chi)$-Markov, $\beta$ is $(\psi,\chi)$-Markov and $T=\beta^\ast\circ \alpha$, where $\beta^\ast:\mathcal{L}\rightarrow\mathcal{M}$ is the adjoint of $\beta$ (see \cite[Remark 1.2]{HM11}). 
\end{definition}
The set of all factorizable $(\phi,\psi)$-Markov maps is closed under composition, the adjoint operation, taking convex combinations and $w^\ast$-limits (See \cite[Proposition 2]{R08}).  Haagerup and Musat proved in \cite[Theorem 2.2]{HM11} the following statement for the class of factorizable quantum channels.
\begin{proposition}\label{HM}
Consider $T\in \mathcal{Q}(n)$. Then the following properties are equivalent:
\begin{enumerate}[\rm (1)]
\item $T$ is factorizable,
\item There exists a tracial $W^\ast$-probability space $(\mathcal{M},\phi)$ and a unitary $u$ in $M_n(\mathbb{C})\otimes \mathcal{M}$ such that
\begin{equation}
Tx=(id_n\otimes \phi)(u^\ast (x\otimes 1_\mathcal{M})u), \qquad x\in M_n(\mathbb{C}).
\end{equation}
\end{enumerate}
\end{proposition}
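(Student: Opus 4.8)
The plan is to prove the two implications separately, the substance lying entirely in $(1)\Rightarrow(2)$.

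For $(2)\Rightarrow(1)$ I would exhibit an explicit factorization. Given a tracial $W^\ast$-probability space $(\mathcal{M},\phi)$ and a unitary $u\in M_n(\mathbb{C})\otimes\mathcal{M}$ with $Tx=(id_n\otimes\phi)(u^\ast(x\otimes 1_\mathcal{M})u)$, set $(\mathcal{L},\chi):=(M_n(\mathbb{C})\otimes\mathcal{M},\tau_n\otimes\phi)$, which is again tracial, and define $\ast$-monomorphisms $\beta(y):=y\otimes 1_\mathcal{M}$ and $\alpha(x):=u^\ast(x\otimes 1_\mathcal{M})u=\text{ad}(u)(x\otimes 1_\mathcal{M})$. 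Both are unital and, since $\chi$ is tracial, $\chi$-preserving (for $\alpha$ this uses the trace property $\chi(u^\ast(x\otimes 1_\mathcal{M})u)=\chi(x\otimes 1_\mathcal{M})=\tau_n(x)$); hence both are $(\tau_n,\chi)$-Markov, condition $(4)$ being vacuous in the tracial case. The adjoint $\beta^\ast$ is the $\chi$-preserving conditional expectation onto $\beta(M_n(\mathbb{C}))=M_n(\mathbb{C})\otimes 1_\mathcal{M}$ composed with the natural identification, i.e. $\beta^\ast=id_n\otimes\phi$, so $\beta^\ast\circ\alpha=T$ and $T$ is factorizable.

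For $(1)\Rightarrow(2)$, suppose $T=\beta^\ast\circ\alpha$ for $(\tau_n,\chi)$-Markov $\ast$-monomorphisms $\alpha,\beta:M_n(\mathbb{C})\to(\mathcal{L},\chi)$. The first step is to pass to the centralizer. Because $\tau_n$ is tracial, $\sigma_t^{\tau_n}=id$, so condition $(4)$ forces $\sigma_t^\chi\circ\alpha=\alpha$ and $\sigma_t^\chi\circ\beta=\beta$; that is, $\alpha(M_n(\mathbb{C}))$ and $\beta(M_n(\mathbb{C}))$ lie in the centralizer $\mathcal{N}:=\mathcal{L}^\chi$, the fixed-point algebra of the modular flow, on which $\chi_0:=\chi|_\mathcal{N}$ is a faithful normal \emph{tracial} state. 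Since $\mathcal{N}$ is globally $\sigma^\chi$-invariant there is a $\chi$-preserving conditional expectation $\mathcal{L}\to\mathcal{N}$, and the tower property of conditional expectations together with $\alpha(x)\in\mathcal{N}$ shows that $\beta^\ast(\alpha(x))$ is computed entirely inside $(\mathcal{N},\chi_0)$. Thus one may assume from the outset that $(\mathcal{L},\chi)=(\mathcal{N},\chi_0)$ is tracial and hence finite.

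The second step uses the structure of matrix subalgebras. In the finite algebra $\mathcal{N}$ the copy $B:=\beta(M_n(\mathbb{C}))$ is a type $\mathrm{I}_n$ subfactor carrying the normalized trace, so with $\mathcal{M}:=B'\cap\mathcal{N}$ and $\phi:=\chi_0|_\mathcal{M}$ one gets a trace-preserving isomorphism $\mathcal{N}\cong M_n(\mathbb{C})\otimes\mathcal{M}$ with $\chi_0=\tau_n\otimes\phi$ and $\beta(y)=y\otimes 1_\mathcal{M}$; here $\phi$ is tracial as the restriction of a trace, and $\beta^\ast=id_n\otimes\phi$. It remains to conjugate $\alpha$ onto $\beta$. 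Both send the matrix units $e_{ij}$ to systems of matrix units in $\mathcal{N}$ summing to $1$, and since equivalent projections summing to $1$ all have central-valued trace $\tfrac1n 1$, the projections $\alpha(e_{11})$ and $\beta(e_{11})$ have equal central trace and are therefore Murray–von Neumann equivalent in $\mathcal{N}$. This yields a unitary $u\in\mathcal{N}\cong M_n(\mathbb{C})\otimes\mathcal{M}$ with $u\,\alpha(x)\,u^\ast=\beta(x)=x\otimes 1_\mathcal{M}$, i.e. $\alpha(x)=u^\ast(x\otimes 1_\mathcal{M})u$, whence $Tx=\beta^\ast(\alpha(x))=(id_n\otimes\phi)(u^\ast(x\otimes 1_\mathcal{M})u)$ with $(\mathcal{M},\phi)$ tracial. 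The main obstacle I anticipate is the reduction to the tracial centralizer and the verification that $\beta^\ast\circ\alpha$ is unaffected by it, which rests on identifying the centralizer with the modular fixed-point algebra and on the tower property; the conjugation step is also delicate, since scalar equality of traces does not suffice and one must argue with the central-valued trace to obtain equivalence of the diagonal projections.
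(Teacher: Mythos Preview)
The paper does not give its own proof of this proposition; it is stated as a quotation of \cite[Theorem~2.2]{HM11} and used as a black box thereafter. So there is no in-paper argument to compare against.

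That said, your sketch is correct and is essentially the standard route (the one taken in \cite{HM11}). The direction $(2)\Rightarrow(1)$ is exactly as you describe. For $(1)\Rightarrow(2)$, your two reductions are the right ones: the Markov condition $(4)$ forces the ranges of $\alpha,\beta$ into the centralizer $\mathcal{L}^\chi$, on which $\chi$ is tracial, and since $\beta(M_n(\mathbb{C}))\subset\mathcal{L}^\chi$ the $\chi$-preserving conditional expectation onto $\beta(M_n(\mathbb{C}))$ factors through $\mathcal{L}^\chi$, so nothing is lost by replacing $(\mathcal{L},\chi)$ with its centralizer. Your structural step is also fine: in a finite von Neumann algebra a unital copy of $M_n(\mathbb{C})$ splits off as a tensor factor with the relative commutant, and your use of the center-valued trace is precisely what is needed---the $n$ pairwise equivalent projections $\alpha(e_{ii})$ summing to $1$ force $\Phi(\alpha(e_{11}))=\tfrac{1}{n}1=\Phi(\beta(e_{11}))$, hence $\alpha(e_{11})\sim\beta(e_{11})$, and the standard formula $u=\sum_i\beta(e_{i1})\,v\,\alpha(e_{1i})$ (with $v$ a partial isometry implementing that equivalence) yields the required unitary. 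The concerns you flag at the end are real but you have in fact addressed them.
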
 
In this case, we say that {\it $T$ has an exact factorization through $(M_n(\mathbb{C})\otimes \mathcal{M}, \tau_n\otimes \phi)$}. A factorization of quantum channels is not unique. We have two examples of a factorization of quantum channels. Firstly we show the following statement.
\begin{lemma}\label{Preserv}
If $T\in\mathcal{Q}(n)$ has an exact factorization through a tracial $W^\ast$-probability space $(M_n(\mathbb{C}) \otimes \mathcal{M},\tau_n\otimes \phi)$ and there exist a tracial $W^\ast$-probability space $(\mathcal{N},\psi)$ and a $(\phi,\psi)$-Markov *-homomorphism $S: (\mathcal{M},\phi) \rightarrow (\mathcal{N},\psi)$, then $T$ also has an exact factorization through $(M_n(\mathbb{C}) \otimes \mathcal{N},\tau_n\otimes \psi)$.
\end{lemma}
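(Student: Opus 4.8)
The plan is to transport the factorizing unitary for $\mathcal{M}$ forward along $S$. First, by hypothesis there is a unitary $u\in\mathcal{U}(M_n(\mathbb{C})\otimes\mathcal{M})$ with $Tx=(id_n\otimes\phi)(u^\ast(x\otimes 1_\mathcal{M})u)$ for all $x\in M_n(\mathbb{C})$ (this is the meaning of $T$ having an exact factorization through $(M_n(\mathbb{C})\otimes\mathcal{M},\tau_n\otimes\phi)$; cf.\ Proposition \ref{HM}). I would then set
\[
v:=(id_n\otimes S)(u)\in M_n(\mathbb{C})\otimes\mathcal{N},
\]
and claim that $v$ is a unitary realizing an exact factorization of $T$ through $(M_n(\mathbb{C})\otimes\mathcal{N},\tau_n\otimes\psi)$.

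To see that $v$ is unitary, I would use that $id_n\otimes S$ is a unital $\ast$-homomorphism: identifying $M_n(\mathbb{C})\otimes\mathcal{M}$ with $M_n(\mathcal{M})$ (and likewise over $\mathcal{N}$), the map $id_n\otimes S$ acts entrywise by the $\ast$-homomorphism $S$, so it preserves products, adjoints and the unit. Hence $v^\ast v=(id_n\otimes S)(u^\ast u)=1_n\otimes 1_\mathcal{N}=vv^\ast$. Next, the same multiplicativity, together with $x\otimes 1_\mathcal{N}=(id_n\otimes S)(x\otimes 1_\mathcal{M})$, gives for $x\in M_n(\mathbb{C})$
\[
v^\ast(x\otimes 1_\mathcal{N})v=(id_n\otimes S)\bigl(u^\ast(x\otimes 1_\mathcal{M})u\bigr).
\]
Applying $id_n\otimes\psi$, using the slice-map identity $(id_n\otimes\psi)\circ(id_n\otimes S)=id_n\otimes(\psi\circ S)$ and the Markov property $\psi\circ S=\phi$ (condition (3) of Definition \ref{Markov} for $S$), I obtain
\[
(id_n\otimes\psi)\bigl(v^\ast(x\otimes 1_\mathcal{N})v\bigr)=(id_n\otimes\phi)\bigl(u^\ast(x\otimes 1_\mathcal{M})u\bigr)=Tx,
\]
which is exactly the asserted exact factorization through $(M_n(\mathbb{C})\otimes\mathcal{N},\tau_n\otimes\psi)$.

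There is essentially no serious obstacle; the argument is a direct push-forward. The only points deserving a sentence of justification are that $id_n\otimes S$ genuinely is a unital $\ast$-homomorphism — immediate from the entrywise description, and requiring neither injectivity nor any continuity hypothesis on $S$ beyond being a $\ast$-homomorphism, precisely because $n<\infty$ — and that the slice maps $id_n\otimes(\cdot)$ compose as written, again immediate entrywise. One may also record that $(\tau_n\otimes\psi)\circ(id_n\otimes S)=\tau_n\otimes\phi$, so that $id_n\otimes S$ is a Markov $\ast$-homomorphism between the two amplified $W^\ast$-probability spaces; this is the structural reason the two factorizations compose, and it is the formulation that will generalize when $\mathcal{M}$ and $\mathcal{N}$ are infinite-dimensional (where one invokes normality of $S$ and $\psi$ to make the slice maps and amplifications well defined).
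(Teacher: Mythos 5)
Your argument is correct and is essentially the paper's own proof: push the factorizing unitary forward via $id_n\otimes S$, use multiplicativity to see the conjugation intertwines, and apply $\psi\circ S=\phi$ to recover $T$. The extra remarks on unitality, the slice-map identity, and the Markov property of $id_n\otimes S$ are accurate elaborations of steps the paper leaves implicit.
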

\begin{proof}
Since $T$ has an exact factorization through $(M_n(\mathbb{C}) \otimes \mathcal{M},\tau_n\otimes\phi)$, there exists a unitary $u \in M_n(\mathbb{C}) \otimes \mathcal{M}$ such that
\begin{equation}
Tx=(id_n \otimes \phi)(u^*(x \otimes 1_{\mathcal{M}})u), \qquad x \in M_n(\mathbb{C}).
\end{equation}
Since $S$ is a *-homomorphism, $(id_n \otimes S)(u)$ is a unitary in $M_n(\mathbb{C}) \otimes \mathcal{N}$. Hence,
\begin{equation}
\begin{split}
(id_n \otimes \psi)\bigl((id_n \otimes S)(u)^*(x \otimes 1_{\mathcal{N}})(id_n \otimes S)(u)\bigr) &= (id_n \otimes \psi)(id_n \otimes S)(u^*(x \otimes 1_{\mathcal{M}})u)\\
&=(id_n \otimes \phi)(u^*(x \otimes 1_{\mathcal{M}})u)=Tx,
\end{split}
\end{equation}
for all $x \in M_n(\mathbb{C}).$ Therefore $T$ has an exact factorization through $(M_n(\mathbb{C}) \otimes \mathcal{N},\tau_n\otimes \psi)$.
\end{proof}
As the second example, we consider a linear map $T$ defined by
\begin{equation}
Tx=\sum_{i=1}^2 a_i^\ast x a_i, \qquad x\in M_2(\mathbb{C}),
\end{equation}
where
\begin{equation}
a_1:=
\begin{pmatrix}
1 & 0\\
0 & 0
\end{pmatrix}, \qquad
a_2:=
\begin{pmatrix}
0 & 0\\
0 & 1
\end{pmatrix},
\end{equation}
has an exact factorization through $M_2(\mathbb{C})\otimes \mathcal{L}\mathbb{F}_2$. It is clear that $\sum_{i=1}^2a_i^\ast a_i=\sum_{i=1}^2a_ia_i^\ast=1_2$. Hence $T$ is a quantum channel on $M_2(\mathbb{C})$ by \cite{Choi}. Let $g_1,g_2$ be generators of the free group $\mathbb{F}_2$ of degree $2$ and set $u:=\sum_{i=1}^2 a_i\otimes \lambda_{g_i} \in \mathcal{U}(M_2(\mathbb{C})\otimes \mathcal{L}\mathbb{F}_2$), where $\lambda_g$ is the left representation of $g\in\mathbb{F}_2$, that is, 
\begin{equation}
\lambda_g(f)(h):=f(g^{-1}h), \qquad f\in \l^2\mathbb{F}_2, \hspace{2mm} g,h\in\mathbb{F}_2,
\end{equation}
and $\mathcal{L}\mathbb{F}_2$ is the free group von Neumann algebra. For all $x\in M_2(\mathbb{C})$,
\begin{equation}
(id_2 \otimes \tau_{\mathcal{L}\mathbb{F}_2})(u^\ast (x\otimes 1_{\mathcal{L}\mathbb{F}_2})u)=\sum_{i,j=1}^2 \tau_{\mathcal{L}\mathbb{F}_2}(\lambda_{g_i}^\ast \lambda_{g_j})a_i^\ast xa_j=\sum_{i,j=1}^2 \delta_{ij}a_i^\ast x a_j=Tx,
\end{equation}
where 
\begin{equation}
\tau_{\mathcal{L}\mathbb{F}_2}(\lambda):=\langle\lambda \delta_e, \delta_e\rangle_{l^2\mathbb{F}_2}, \qquad \lambda \in \mathcal{L}\mathbb{F}_2,
\end{equation}
and $e\in\mathbb{F}_2$ is the unit of $\mathbb{F}_2$. Therefore $T$ has an exact factorization through $(M_2(\mathbb{C})\otimes \mathcal{L}\mathbb{F}_2, \tau_2 \otimes \tau_{\mathcal{L}\mathbb{F}_2})$. On the other hand, $T$ also has an exact factorization through $(M_2(\mathbb{C})\otimes M_4(\mathbb{C}), \tau_2\otimes \tau_4)$ since $a_1,a_2\in M_2(\mathbb{C})$ are self-adjoint, $a_1^2+a_2^2=1_2$ and $a_1a_2=a_2a_1$ and therefore we can apply \cite[Corollary 2.5]{HM11} to the quantum channel $T$. Similarly we have the following statement.
\begin{proposition}
Consider $d\ge 2$. Let $T$ be a quantum channel on $M_d(\mathbb{C})$ defined by
\begin{equation}
Tx:=\sum_{i=1}^d E_{ii}^\ast x E_{ii}, \qquad x\in M_d(\mathbb{C}),
\end{equation}
where $\{E_{ij}\}_{1\le i,j \le d}$ is the set of standard matrix units in $M_d(\mathbb{C})$. Then the following conditions hold.
\begin{enumerate}[\rm (1)]
\item $T$ has an exact factorization through $(M_d(\mathbb{C})\otimes \mathcal{L}\mathbb{F}_d,\tau_d\otimes \tau_{\mathcal{L}\mathbb{F}_d})$.
\item $T$ has an exact factorization through $(M_d(\mathbb{C})\otimes M_{2^d}(\mathbb{C}),\tau_d\otimes\tau_{2^d})$.
\end{enumerate}
\end{proposition}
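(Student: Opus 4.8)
The plan is to follow the template of the $d=2$ example established just above, replacing the two Kraus operators $a_1,a_2$ by the $d$ diagonal matrix units $E_{11},\dots,E_{dd}$. The structural facts I would rely on are that each $E_{ii}$ is a self-adjoint projection, that $E_{ii}E_{jj}=\delta_{ij}E_{ii}$, and that $\sum_{i=1}^d E_{ii}=1_d$; in particular $\sum_{i=1}^d E_{ii}^\ast E_{ii}=\sum_{i=1}^d E_{ii}E_{ii}^\ast=1_d$, so $T$ is indeed a quantum channel on $M_d(\mathbb{C})$ by \cite{Choi}.

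For part (1), I would set $u:=\sum_{i=1}^d E_{ii}\otimes\lambda_{g_i}\in M_d(\mathbb{C})\otimes\mathcal{L}\mathbb{F}_d$, where $g_1,\dots,g_d$ are the free generators of $\mathbb{F}_d$. Using $E_{ii}E_{jj}=\delta_{ij}E_{ii}$ one gets $u^\ast u=\sum_{i,j}E_{ii}E_{jj}\otimes\lambda_{g_i^{-1}g_j}=\sum_i E_{ii}\otimes 1=1$ and, symmetrically, $uu^\ast=\sum_{i,j}E_{ii}E_{jj}\otimes\lambda_{g_ig_j^{-1}}=1$, so $u$ is unitary. Then
\begin{equation*}
(id_d\otimes\tau_{\mathcal{L}\mathbb{F}_d})\bigl(u^\ast(x\otimes 1_{\mathcal{L}\mathbb{F}_d})u\bigr)=\sum_{i,j=1}^d\tau_{\mathcal{L}\mathbb{F}_d}(\lambda_{g_i}^\ast\lambda_{g_j})\,E_{ii}^\ast xE_{jj}=\sum_{i,j=1}^d\delta_{ij}\,E_{ii}xE_{jj}=Tx,
\end{equation*}
since $\tau_{\mathcal{L}\mathbb{F}_d}(\lambda_{g_i}^\ast\lambda_{g_j})=\langle\delta_{g_i^{-1}g_j},\delta_e\rangle_{l^2\mathbb{F}_d}=\delta_{ij}$ by freeness of $g_1,\dots,g_d$. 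This is exactly the factorization of $T$ through $(M_d(\mathbb{C})\otimes\mathcal{L}\mathbb{F}_d,\tau_d\otimes\tau_{\mathcal{L}\mathbb{F}_d})$ asked for.

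For part (2), I would simply observe that the Kraus operators $E_{11},\dots,E_{dd}$ are pairwise commuting, self-adjoint, and satisfy $\sum_{i=1}^d E_{ii}^2=\sum_{i=1}^d E_{ii}=1_d$, so that \cite[Corollary 2.5]{HM11} applies directly (just as in the $d=2$ case treated above) and yields an exact factorization of $T$ through $(M_d(\mathbb{C})\otimes M_{2^d}(\mathbb{C}),\tau_d\otimes\tau_{2^d})$. If a self-contained argument is preferred, the underlying construction can be spelled out: choose anticommuting self-adjoint unitaries $c_1,\dots,c_d\in M_{2^d}(\mathbb{C})$ (the standard Pauli/Clifford symmetries), put $v:=\sum_{i=1}^d E_{ii}\otimes c_i$, note that $v^\ast v=\sum_i E_{ii}^2\otimes 1+\sum_{i<j}E_{ii}E_{jj}\otimes(c_ic_j+c_jc_i)=1$ because $E_{ii}E_{jj}=0$ for $i\neq j$, and check $(id_d\otimes\tau_{2^d})(v^\ast(x\otimes 1)v)=\sum_{i,j}\tau_{2^d}(c_ic_j)E_{ii}xE_{jj}=Tx$ using $\tau_{2^d}(c_ic_j)=\delta_{ij}$.

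All of this is routine: the only point demanding any care is the explicit choice of $d$ anticommuting symmetries in $M_{2^d}(\mathbb{C})$ (the Jordan--Wigner/Pauli tensor construction) if one goes the self-contained route in (2); invoking \cite[Corollary 2.5]{HM11} instead makes even that unnecessary. Hence I do not expect a genuine obstacle in either part.
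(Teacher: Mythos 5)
Your proposal is correct and follows essentially the same route as the paper, which states this proposition as the direct generalization ("Similarly we have...") of the preceding $d=2$ example: part (1) via $u=\sum_i E_{ii}\otimes\lambda_{g_i}$ and orthogonality of the $\lambda_{g_i}$ in $\mathcal{L}\mathbb{F}_d$, part (2) via \cite[Corollary 2.5]{HM11} applied to the commuting self-adjoint Kraus operators $E_{11},\dots,E_{dd}$ with $\sum_i E_{ii}^2=1_d$. Your optional explicit Clifford-generator construction for (2) is also sound and merely unfolds what that corollary provides.
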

Denote by
\begin{equation}
\mathcal{F}(n):=\{T\in \mathcal{Q}(n): T \text{ is factorizable}\}.
\end{equation}
By the statements before Proposition \ref{HM}, we have that  conv(Aut($M_n(\mathbb{C})$))$\subset \mathcal{F}(n)$ for all positive integers $n$. Haagerup and Musat found a quantum channel in $\mathcal{F}(n)\setminus$conv(Aut($M_n(\mathbb{C})$)) in \cite[Example 3.3]{HM11}. In particular, K\"{u}mmerer proved in \cite{Kum} that conv(Aut($M_2(\mathbb{C})$))=$\mathcal{F}(2)$. Haagerup and Musat pointed out the important relations between the factorizable quantum channels and the Connes embedding problem in \cite{HM11,HM15}. \\\\
Recall the completely depolarizing channels. Let $S_k:M_n(\mathbb{C})\rightarrow M_n(\mathbb{C})$ be a linear map defined by
\begin{equation}
S_k(x):=\tau_k(x) 1_k, \qquad x\in M_k(\mathbb{C}).
\end{equation}
The map $S_k$ is called the {\it completely depolarizing channel} on $M_n(\mathbb{C})$. Note that $S_k$ is in\\conv(Aut($M_k(\mathbb{C})$)) and therefore it is a factorizable quantum channel on $M_n(\mathbb{C})$. It is clear that $S_k\otimes S_l=S_{kl}$ for all $k,l\in\mathbb{N}$. It is very important to understand tensors $T\otimes S_k$ of a quantum channel $T$ with the completely depolarizing channel $S_k$ to know what $T$ has an exact factorization through some $W^\ast$-probability space.  By \cite[Corollary 2.5]{HM11}, Haagerup and Musat found a quantum channel $T\in\mathcal{F}(n)\setminus$ conv(Aut($M_n(\mathbb{C})$)) $(n\ge3)$ such that it has an exact factorization through $(M_n(\mathbb{C})\otimes  M_{2^d}(\mathbb{C}), \tau_n\otimes\tau_{2^d})$ for some $d\ge 3$. By \cite[Corollary 3.5]{HM15}, we have that $T\otimes S_{2^d} \in$ conv(Aut($M_n(\mathbb{C})\otimes M_{2^d}(\mathbb{C})$)).

\section{Proof of Theorem 1.2}
We prove that Theorem \ref{TM11} holds. We first introduce the following sets. 
\begin{equation}
\begin{split}
 \mathcal{I}_n&:= \left\{ T \in \mathcal{F}(n) \left|
 \begin{array}{l}
  \exists k \in \mathbb{N} \text{ s.t. }  T\otimes S_k=\sum_{i=1}^{d(k)}c_i\text{ad}(u_i)\\
\text{for some positive integer } d(k),\\
  c_1,\cdots, c_{d(k)} \in \mathbb{Q}_+ \text{ with }\sum_{i=1}^{d(k)}c_i=1,\\
  \text{and } u_1,\cdots, u_{d(k)} \in \mathcal{U}(M_n(\mathbb{C}) \otimes M_k(\mathbb{C}))
 \end{array}
 \right.\right\},\\
\mathcal{J}_n&:= \left\{ T \in \mathcal{F}(n) \left|
 \begin{array}{l}
  \exists k \in \mathbb{N} \text{ s.t. }  T \text{ has an exact factorization through}\\
(M_n(\mathbb{C})\otimes M_k(\mathbb{C}),\tau_n\otimes\tau_k)
 \end{array}
 \right.\right\} .
\end{split}
 \end{equation}
By using these notations, we can rewrite as the statement of Theorem \ref{TM11}.
\begin{theorem}\label{AnQ}
Let $n \in \mathbb{N}$. Then $\mathcal{I}_n \subset \mathcal{J}_n$ holds.
\end{theorem}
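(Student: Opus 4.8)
The plan is to exploit the rationality of the $c_i$ to upgrade the convex decomposition of $T\otimes S_k$ to a single‑unitary (Markov) factorization through a matrix algebra, and then to ``absorb'' the completely depolarizing factor $S_k$ into the ancilla. First I would clear denominators: write $c_i=p_i/q$ with a common denominator $q\in\mathbb{N}$ and $p_i\in\mathbb{N}$, so that $\sum_i p_i=q$. Listing each $u_i$ with multiplicity $p_i$ yields unitaries $v_1,\dots,v_q\in\mathcal{U}(M_n(\mathbb{C})\otimes M_k(\mathbb{C}))$ with
$$T\otimes S_k=\frac1q\sum_{j=1}^{q}\mathrm{ad}(v_j).$$

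Next I would realize this uniform average as a factorization over $M_q(\mathbb{C})$. Set $w:=\sum_{j=1}^{q}v_j\otimes E_{jj}\in M_n(\mathbb{C})\otimes M_k(\mathbb{C})\otimes M_q(\mathbb{C})$, where $\{E_{jj}\}$ are the diagonal matrix units of $M_q(\mathbb{C})$; since the $v_j$ are unitary and the $E_{jj}$ are orthogonal projections summing to $1_q$, $w$ is a unitary. A direct computation (using $E_{jj}E_{ll}E_{mm}=\delta_{jl}\delta_{lm}E_{jj}$ and $\tau_q(E_{jj})=1/q$) gives, for all $y\in M_n(\mathbb{C})\otimes M_k(\mathbb{C})$,
$$\bigl(id_{nk}\otimes\tau_q\bigr)\bigl(w^*(y\otimes 1_q)w\bigr)=\frac1q\sum_{j=1}^{q}v_j^*yv_j=(T\otimes S_k)(y),$$
so $T\otimes S_k$ has an exact factorization through $(M_n(\mathbb{C})\otimes M_k(\mathbb{C})\otimes M_q(\mathbb{C}),\tau_n\otimes\tau_k\otimes\tau_q)$. (Alternatively, one may first factorize $T\otimes S_k$ through $\mathbb{C}^{d(k)}$ equipped with the weighted trace determined by the $c_i$, and then apply Lemma \ref{Preserv} together with the trace‑preserving unital $\ast$-homomorphism $\mathbb{C}^{d(k)}\hookrightarrow M_q(\mathbb{C})$ sending the $i$-th minimal projection to a diagonal block of size $p_i$; rationality enters at exactly this point in that variant.)

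The crucial step is then to eliminate the redundant $M_k(\mathbb{C})$ tensor factor. Evaluating the identity above at $y=x\otimes 1_k$ for $x\in M_n(\mathbb{C})$, and using $(T\otimes S_k)(x\otimes 1_k)=Tx\otimes S_k(1_k)=Tx\otimes 1_k$, I would apply $id_n\otimes\tau_k$ to both sides: on the left this returns $Tx$, while on the right, since $(id_n\otimes\tau_k)\circ(id_{nk}\otimes\tau_q)=id_n\otimes\tau_k\otimes\tau_q$ as maps $M_n(\mathbb{C})\otimes M_k(\mathbb{C})\otimes M_q(\mathbb{C})\to M_n(\mathbb{C})$, it becomes
$$Tx=\bigl(id_n\otimes(\tau_k\otimes\tau_q)\bigr)\bigl(w^*(x\otimes 1_k\otimes 1_q)w\bigr),\qquad x\in M_n(\mathbb{C}).$$
Identifying $M_k(\mathbb{C})\otimes M_q(\mathbb{C})\cong M_{kq}(\mathbb{C})$ with $\tau_k\otimes\tau_q=\tau_{kq}$ and $1_k\otimes1_q=1_{kq}$, and viewing $w$ as a unitary in $M_n(\mathbb{C})\otimes M_{kq}(\mathbb{C})$, this exhibits $T$ as having an exact factorization through $(M_n(\mathbb{C})\otimes M_{kq}(\mathbb{C}),\tau_n\otimes\tau_{kq})$; in particular $T\in\mathcal{F}(n)$ by Proposition \ref{HM}, so taking $L:=kq$ shows $T\in\mathcal{J}_n$.

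I expect the only genuinely non‑routine point to be this last step: the observation that restricting a factorization of $T\otimes S_k$ to operators of the form $x\otimes 1_k$ and then averaging over the $M_k(\mathbb{C})$ leg with $\tau_k$ recovers an honest factorization of $T$, i.e.\ that the completely depolarizing tensor factor can be merged into the finite‑dimensional ancilla without leaving the class of matrix algebras. The remaining ingredients — the common‑denominator reduction and the $\sum_j v_j\otimes E_{jj}$ construction — are standard bookkeeping.
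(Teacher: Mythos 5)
Your proof is correct and follows essentially the same route as the paper: clear denominators of the rational coefficients, repeat each $u_i$ with the appropriate multiplicity, assemble the block-diagonal unitary $\sum_j v_j\otimes E_{jj}$ over an ancilla $M_q(\mathbb{C})$, and recover $Tx$ by applying $id_n\otimes\tau_k\otimes\tau_q$ to $w^*(x\otimes 1_k\otimes 1_q)w$. The only cosmetic difference is that the paper takes the least common multiple of the denominators and computes $(id_n\otimes\tau_k\otimes\tau_L)(U^*(x\otimes 1_k\otimes 1_L)U)=Tx$ in one pass, whereas you first factorize $T\otimes S_k$ and then trace out the $M_k(\mathbb{C})$ leg — both land on the same factorization through $(M_n(\mathbb{C})\otimes M_{kq}(\mathbb{C}),\tau_n\otimes\tau_{kq})$.
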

\begin{proof}
Suppose that $n \in \mathbb{N}$. If $T \in \mathcal{I}_n$, then there is a positive integer $k>0$ such that $T \otimes S_k(z)=\sum_{i=1}^{d(k)}c_iu_i^*zu_i$, for all $z \in M_n(\mathbb{C}) \otimes M_k(\mathbb{C})$ for some positive integer $d(k)>0$, unitaries $u_1, \cdots, u_{d(k)} \in M_n(\mathbb{C}) \otimes M_k(\mathbb{C})$, and positive rational numbers $c_1, \cdots, c_{d(k)}>0$ with $\sum_{i=1}^{d(k)}c_i=1$.  Therefore
\begin{equation}
Tx=(id_n \otimes \tau_k)(T \otimes S_k)(x \otimes 1_k)=\sum_{i=1}^{d(k)}c_i(id_n \otimes \tau_k)(u_i^*(x \otimes 1_k) u_i), \hspace{2mm} x \in M_n(\mathbb{C}).
\end{equation}
Since $c_i$ is a rational number for each $1\le i \le d(k)$, we suppose that
\begin{equation}
c_i=\frac{l_i}{L_i}, \qquad 1\le i \le d(k),
\end{equation}
where $l_i$ and $L_i$ are relatively prime positive numbers for each $i$. Let $L$ be the least common multiple of $L_1,L_2,\cdots ,L_{d(k)}$. Then we can rewrite as:
\begin{equation}
c_i=\frac{l_i\times \frac{L}{L_i}}{L}, \qquad 1\le i \le d(k)
\end{equation}
and rewrite as $C_i:=l_i \times \frac{L}{L_i} \in \mathbb{N}$ for each $i$. Note that $C_1+C_2+\cdots+C_{d(k)}=L$. \\
Then we set 
\begin{equation}
U:=\text{diag}(\underbrace{u_1, \cdots ,u_1}_{C_1}, \underbrace{u_2 \cdots ,u_2}_{C_2}, \cdots, \underbrace{u_{d(k)}, \cdots ,u_{d(k)}}_{C_{d(k)}}) \in M_n(\mathbb{C}) \otimes M_k(\mathbb{C}) \otimes M_L(\mathbb{C}).
\end{equation}
Clearly the block matrix $U$ is a unitary in $M_n(\mathbb{C}) \otimes M_k(\mathbb{C}) \otimes M_L(\mathbb{C})$, and we have that
\begin{equation}
\begin{split}
(id_n& \otimes \tau_k \otimes \tau_L)(U^*(x \otimes 1_k \otimes 1_L)U)\\
&=(id_n \otimes \tau_k \otimes \tau_L) \left( 
\begin{array}{l}
\text{diag}\bigl(\underbrace{u_1^*(x \otimes 1_k)u_1, \cdots ,u_1^*(x \otimes 1_k)u_1}_{C_1}, \cdots\\
\hspace{3cm}\cdots ,\underbrace{u_{d(k)}^*(x \otimes 1_l)u_{d(k)}, \cdots ,u_{d(k)}^*(x \otimes 1_k)u_{d(k)}}_{C_{d(k)}}\bigr)
\end{array} 
\right)\\
&=(id_n \otimes \tau_k \otimes \tau_L) \left(
\begin{array}{l}
\sum_{i=1}^{C_1}u_1^*(x\otimes 1_k)u_1 \otimes E_{ii} + \cdots\\ 
\hspace{2cm}\cdots+\sum_{i=1}^{C_{d(k)}}u_{d(k)}^*(x\otimes 1_k)u_{d(k)} \otimes E_{L-c_{d(k)}+i,L-c_{d(k)}+i}
\end{array} 
\right)\\
&=
\begin{array}{l}
\sum_{i=1}^{C_1}\tau_L(E_{ii})(id_n \otimes \tau_k)(u_1^*(x\otimes 1_k)u_1)+ \cdots\\
\hspace{3cm} \cdots+\sum_{i=1}^{C_{d(k)}} \tau_L(E_{L-c_{d(k)}+i,L-c_{d(k)}+i}) (id_n \otimes \tau_k)(u_{d(k)}^*(x\otimes 1_k)u_{d(k)})
\end{array} \\
&=\sum_{i=1}^{d(k)}\frac{C_i}{L}(id_n \otimes \tau_k)(u_i^*(x\otimes 1_k)u_i)\\
&=\sum_{i=1}^{d(k)}c_i(id_n \otimes \tau_k)(u_i^*(x \otimes 1_k) u_i)=Tx,
\end{split}
\end{equation}
for all $x \in M_n(\mathbb{C})$, where $\{E_{ij}\}_{1 \le i,j \le L}$ is the set of standard matrix units in $M_L(\mathbb{C})$. Therefore $T$ has an exact factorization through $(M_n(\mathbb{C}) \otimes (M_k(\mathbb{C}) \otimes M_L(\mathbb{C})), \tau_n\otimes (\tau_k\otimes\tau_L))$. Hence we have the inclusion $\mathcal{I}_n \subset \mathcal{J}_n$, for each positive integer $n \in \mathbb{N}$. Thus the proof is complete.
\end{proof}

\section{Proof of Theorem 1.4}
We prove that Theorem \ref{TM13} holds. We also denote the following set.
\begin{equation}
\mathcal{K}_n:=\{T\in\mathcal{F}(n): \exists k\in\mathbb{N} \text{ s.t. } T \otimes S_k \in \text{conv(Aut(}M_n(\mathbb{C})\otimes M_k(\mathbb{C})))\}.
\end{equation}
Clearly conv(Aut($M_n(\mathbb{C})$))$\subset \mathcal{K}_n$. The statement before the condition \eqref{EF} in section 1 or \cite[Corollary]{HM15} implies that $\mathcal{J}_n\subset\mathcal{K}_n$ holds for each positive integer $n$.  By the last statements in section 3, the set $\mathcal{K}_n\setminus$conv(Aut($M_n(\mathbb{C})$))) is not empty for all $n\ge 3$. 
We can rewrite as the statement of Theorem \ref{TM13} by the new notations in sections 4 and 5.
\begin{theorem}\label{fin} 
Let $n\in \mathbb{N}$. If for any $\alpha:=(\alpha_1,\cdots ,\alpha_d) \in \mathbb{Q}_+^d$ with $\alpha_1+\cdots +\alpha_d=1$ there exist some positive integers $k_1,\cdots ,k_d$ such that $T\in\mathcal{F}(n)$ has an exact factorization through $(M_n(\mathbb{C}) \otimes (M_{k_1}(\mathbb{C})\oplus \cdots \oplus M_{k_d}(\mathbb{C})),\tau_n\otimes\tau_\alpha)$, where $\tau_\alpha$ is defined by \eqref{tau}, then $T \in \mathcal{J}_n$, and therefore $T\in\mathcal{K}_n$.
\end{theorem}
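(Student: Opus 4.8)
The plan is to reduce everything to Lemma \ref{Preserv}: it is enough to produce a single matrix algebra $(M_k(\mathbb{C}),\tau_k)$ together with a trace-preserving unital $*$-homomorphism from one of the finite-dimensional algebras supplied by the hypothesis into $(M_k(\mathbb{C}),\tau_k)$. Since the hypothesis is quantified over all rational weight vectors, I am free to choose a convenient one, and the natural choice is the uniform vector $\alpha=(1/d,\dots,1/d)$ for a suitable $d\ge 1$ (when $d=1$ the hypothesis already gives the conclusion, so the content lies in $d\ge 2$). Applying the hypothesis to this $\alpha$ yields positive integers $k_1,\dots,k_d$ and an exact factorization of $T$ through $(M_n(\mathbb{C})\otimes(M_{k_1}(\mathbb{C})\oplus\cdots\oplus M_{k_d}(\mathbb{C})),\tau_n\otimes\tau_\alpha)$.

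Next I would build the embedding. Put $m:=\operatorname{lcm}(k_1,\dots,k_d)$ and $k:=d\,m$, and define $S\colon M_{k_1}(\mathbb{C})\oplus\cdots\oplus M_{k_d}(\mathbb{C})\to M_k(\mathbb{C})$ by $S(x_1,\dots,x_d):=\bigoplus_{i=1}^{d}\bigl(x_i\otimes 1_{m/k_i}\bigr)$, i.e.\ the block-diagonal matrix in which each $x_i$ is repeated $m/k_i$ times (so the blocks coming from the $i$-th summand occupy a diagonal sub-block of size $(m/k_i)k_i=m$, and the $d$ summands together fill a $dm=k$ dimensional space). This $S$ is a unital $*$-monomorphism. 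It remains to verify that $S$ is $(\tau_\alpha,\tau_k)$-Markov in the sense of Definition \ref{Markov}: complete positivity is automatic for a $*$-homomorphism; unitality holds by the size count $\sum_i(m/k_i)k_i=dm=k$; trace-preservation follows from $\tau_k(S(x_1,\dots,x_d))=\frac1k\sum_i(m/k_i)\operatorname{Tr}(x_i)=\sum_i\frac{m}{k}\tau_{k_i}(x_i)=\frac1d\sum_i\tau_{k_i}(x_i)=\tau_\alpha(x_1,\dots,x_d)$; and condition (4) of Definition \ref{Markov} is vacuous because the modular automorphism group of a trace is trivial.

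Finally, applying Lemma \ref{Preserv} with $\mathcal{M}=M_{k_1}(\mathbb{C})\oplus\cdots\oplus M_{k_d}(\mathbb{C})$, $\phi=\tau_\alpha$, $\mathcal{N}=M_k(\mathbb{C})$, $\psi=\tau_k$, and the map $S$ above shows that $T$ has an exact factorization through $(M_n(\mathbb{C})\otimes M_k(\mathbb{C}),\tau_n\otimes\tau_k)$; that is, $T\in\mathcal{J}_n$, and since $\mathcal{J}_n\subset\mathcal{K}_n$ (as noted just before Theorem \ref{fin}) also $T\in\mathcal{K}_n$. I do not expect a deep obstacle here once Lemma \ref{Preserv} is in hand: the only point that needs care is choosing the multiplicities $m/k_i$ so that $S$ is simultaneously \emph{unital} ($\sum_i(m/k_i)k_i=k$) and \emph{trace-preserving} ($(m/k_i)k_i/k=\alpha_i$ for every $i$), and picking the uniform vector together with $k=d\operatorname{lcm}(k_1,\dots,k_d)$ is exactly what makes both conditions hold at once while keeping every multiplicity a positive integer, so that $S$ really lands in a full matrix algebra rather than a proper direct-sum subalgebra.
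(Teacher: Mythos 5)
Your argument is correct and reaches the conclusion by a genuinely more streamlined route than the paper. The paper proceeds in three steps: first the case where all summands have equal size $k$ (repeating each $x_i$ with multiplicity $l_i$ where $\alpha_i=l_i/L$), then the case $d=2$ with $k_1\neq k_2$ (equalizing sizes by mapping into $M_{k_1k_2}(\mathbb{C})\oplus M_{k_1k_2}(\mathbb{C})$ via $(x,y)\mapsto(x\otimes 1_{k_2},y\otimes 1_{k_1})$ and then invoking the first step), and finally the general case ``by repeatedly'' applying the first two steps. Your single embedding $S(x_1,\dots,x_d)=\bigoplus_i(x_i\otimes 1_{m/k_i})$ with $m=\operatorname{lcm}(k_1,\dots,k_d)$ collapses all of this into one verification, and your bookkeeping (unitality from $\sum_i(m/k_i)k_i=dm$, trace-preservation from $(m/k_i)k_i/k=1/d$) is correct; Lemma \ref{Preserv} then applies exactly as you say. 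One caveat worth flagging: you instantiate the hypothesis only at the uniform vector $\alpha=(1/d,\dots,1/d)$. That is logically sufficient for the theorem as literally quantified (``for any $\alpha$ there exist $k_i$ \dots''), but the abstract, Problem 1.3, and the paper's own proof all indicate that the intended content is the stronger per-instance statement: a factorization through \emph{any given} rational $\alpha$ and \emph{any given} $k_1,\dots,k_d$ already forces $T\in\mathcal{J}_n$. Your construction proves that too with no extra ideas --- for $\alpha_i=l_i/L$ take $k:=L\cdot\operatorname{lcm}(k_1,\dots,k_d)$ and multiplicities $m_i:=l_i\operatorname{lcm}(k_1,\dots,k_d)/k_i$, so that $\sum_i m_ik_i=k$ and $m_ik_i/k=\alpha_i$ --- so I would state the embedding at that level of generality rather than cherry-picking the uniform weights.
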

\begin{proof}
{\bf (Step 1)} Suppose that $\alpha:=(\alpha_1,\cdots ,\alpha_d) \in \mathbb{Q}_+^d$ with $\alpha_1+\cdots +\alpha_d=1$ and  there exist $k\in\mathbb{N}$ and the normal faithful tracial state $\tau_\alpha$ defined by
\begin{equation}
\tau_{\alpha}(x_1,\cdots ,x_d):=\alpha_1 \tau_{k}(x_1) + \cdots + \alpha_d \tau_{k}(x_d), \qquad (x_1,\cdots ,x_d)\in\underbrace{M_k(\mathbb{C}) \oplus \cdots \oplus M_k(\mathbb{C})}_d
\end{equation}
such that $T$ has an exact factorization through $(M_n(\mathbb{C}) \otimes (\underbrace{M_k(\mathbb{C}) \oplus \cdots \oplus M_k(\mathbb{C})}_d),\tau_n\otimes \tau_\alpha)$. Since $\alpha_1,\cdots ,\alpha_d$ are rational numbers, we can write as
\begin{equation}
\alpha_i=\frac{l_i}{L},\qquad 1\le i\le d,
\end{equation}
where $l_i$ and $L$ are positive integers and $l_1+\cdots +l_d=L$. We will define the following map:
\begin{equation}
\phi:\underbrace{M_k(\mathbb{C}) \oplus \cdots \oplus M_k(\mathbb{C})}_d \rightarrow M_{kL}(\mathbb{C}), \qquad (x_1,\cdots, x_d) \mapsto \text{diag}(\underbrace{x_1,\cdots ,x_1}_{l_1}, \cdots ,\underbrace{x_d,\cdots ,x_d}_{l_d})
\end{equation}
By the definition of $\phi$, it is easy to check that $\phi$ is a unital completely positive *-homomorphism. We consider $x_i=(x_{sl}^i)_{1 \le s,l \le k} \in M_k(\mathbb{C})$ $(1 \le i \le d)$. Then 
\begin{equation}
\tau_{kL} \circ \phi (x_1, \cdots ,x_d) = \tau_{kL}(\text{diag}(\underbrace{x_1,\cdots ,x_1}_{l_1}, \cdots ,\underbrace{x_d,\cdots ,x_d}_{l_d})=\frac{1}{kL} \sum_{i=1}^d \sum_{j=1}^k l_ix_{jj}^i.
\end{equation}
On the other hand,
\begin{equation}
\tau_{\alpha}(x_1, \cdots ,x_d)=\sum_{i=1}^d\alpha_i\tau_k(x_i)=\sum_{i=1}^d \frac{l_i}{L} \Bigl(\frac{1}{k} \sum_{j=1}^kx_{jj}^i \Bigr)=\frac{1}{kL} \sum_{i=1}^d \sum_{j=1}^k l_ix_{jj}^i.
\end{equation}
Therefore $\phi$ is a unital completely positive $(\tau_\alpha,\tau_{kL})$-preserving *-homomorphism from  $(\underbrace{M_k(\mathbb{C}) \oplus \cdots \oplus M_k(\mathbb{C})}_d,\tau_\alpha)$ to $(M_{kL}(\mathbb{C}),\tau_{kL})$. By Lemma \ref{Preserv}, $T$ also has an exact factorization through $(M_n(\mathbb{C}) \otimes M_{kL}(\mathbb{C}),\tau_n\otimes \tau_{kL})$. By \cite[Corollary 3.5]{HM15}, $T \otimes S_{kL} \in \text{conv(Aut}(M_n(\mathbb{C}) \otimes M_{kL}(\mathbb{C})))$. Therefore $T\in\mathcal{J}_n\subset\mathcal{K}_n$.\vspace{2mm}\\
{\bf (Step 2)} Suppose that $\alpha:=(\alpha_1,\alpha_2)\in \mathbb{Q}_+^2$ with $\alpha_1+\alpha_2=1$ and assume that $T$ has an exact factorization through $(M_n(\mathbb{C}) \otimes (M_{k_1}(\mathbb{C}) \oplus M_{k_2}(\mathbb{C})),\tau_n\otimes \tau_\alpha)$ (In general, $k_1 \neq k_2$), where $\tau_\alpha$ is a normal faithful tracial state on $M_{k_1}(\mathbb{C})\otimes M_{k_2}(\mathbb{C})$ defined by
\begin{equation}
\tau_\alpha(x,y) :=\alpha_1 \tau_{k_1}(x)+ \alpha_2 \tau_{k_2}(y), \qquad (x,y)\in M_{k_1}(\mathbb{C})\otimes M_{k_2}(\mathbb{C}).
\end{equation} 
We define the following map:
\begin{equation}
\psi: M_{k_1}(\mathbb{C}) \oplus M_{k_2}(\mathbb{C}) \rightarrow M_{k_1k_2}(\mathbb{C}) \oplus M_{k_1k_2}(\mathbb{C}), \qquad
(x,y) \mapsto (\text{diag}(\underbrace{x, \cdots ,x}_{k_2}), \text{diag}(\underbrace{y, \cdots ,y}_{k_1})).
\end{equation}
It is clear that $\psi$ is a unital completely positive *-homomorphism. Consider two matrices $x=(x_{ij})_{1 \le i,j \le k_1} \in M_{k_1}(\mathbb{C})$ and $y=(y_{ij})_{1 \le i,j \le k_2} \in M_{k_2}(\mathbb{C})$. Then
\begin{equation}
\begin{split}
(\alpha_1 \tau_{k_1k_2} \oplus \alpha_2 \tau_{k_1k_2}) \circ \psi (x, y)&=(\alpha_1 \tau_{k_1k_2} \oplus \alpha_2 \tau_{k_1k_2}) (\text{diag}(\underbrace{x, \cdots ,x}_{k_2}), \text{diag}(\underbrace{y, \cdots ,y}_{k_1}))\\
&=\alpha_1 \tau_{k_1k_2}(\text{diag}(\underbrace{x, \cdots ,x}_{k_2}))+\alpha_2\tau_{k_1k_2}(\text{diag}(\underbrace{y, \cdots ,y}_{k_1}))\\
&=\alpha_1\Bigl(\frac{k_2}{k_1k_2}\sum_{i=1}^{k_1}x_{ii}\Bigr)+\alpha_2\Bigl(\frac{k_1}{k_1k_2}\sum_{j=1}^{k_2}y_{jj}\Bigr)\\
&=\frac{\alpha_1}{k_1}\sum_{i=1}^{k_1}x_{ii}+\frac{\alpha_2}{k_2}\sum_{j=1}^{k_2}y_{jj}\\
&=\alpha_1\tau_{k_1}(x)+\alpha_2 \tau_{k_2}(y)=\tau_{\alpha}(x,y),
\end{split}
\end{equation}
where $\alpha_1 \tau_{k_1k_2} \oplus \alpha_2 \tau_{k_1k_2}$ is a normal faithful tracial state on $M_{k_1k_2}(\mathbb{C}) \oplus M_{k_1k_2}(\mathbb{C})$ defined by
\begin{equation}
\alpha_1 \tau_{k_1k_2} \oplus \alpha_2 \tau_{k_1k_2}(x,y):=\alpha_1\tau_{k_1k_2}(x)+\alpha_2\tau_{k_1k_2}(y), \qquad (x,y)\in M_{k_1k_2}(\mathbb{C}) \oplus M_{k_1k_2}(\mathbb{C}).
\end{equation}
Therefore $\psi$ is a unital completely positive $(\tau_\alpha, \alpha_1\tau_{k_1k_2} \oplus \alpha_2\tau_{k_1k_2})$-preserving *-homomorphism. By using Lemma \ref{Preserv}, $T$ has an exact factorization through $(M_n(\mathbb{C}) \otimes (M_{k_1k_2}(\mathbb{C}) \oplus M_{k_1k_2}(\mathbb{C})),\tau_n\otimes(\alpha_1 \tau_{k_1k_2} \oplus \alpha_2 \tau_{k_1k_2}))$. Denote by
\begin{equation}
\alpha_1=\frac{l_1}{L},\hspace{1mm} \alpha_2=\frac{l_2}{L}, \hspace{2mm} l_1,l_2,L \in \mathbb{N} \text{ with } l_1+l_2=L.
\end{equation}
By using the first step, $T$ has an exact factorization through $(M_n(\mathbb{C}) \otimes M_{k_1k_2L}(\mathbb{C}),\tau_n\otimes \tau_{k_1k_2L})$, and therefore $T\in\mathcal{J}_n\subset\mathcal{K}_n$ by \cite[Corollary 3.5]{HM15}. \vspace{2mm}\\
{\bf (Step 3)} Assume that $\alpha:=(\alpha_1,\cdots ,\alpha_d) \in \mathbb{Q}_+^d$ with $\alpha_1+\cdots +\alpha_d=1$ and there exist some positive integers $k_1,\cdots ,k_d$ and a normal faithful tracial state $\tau_{\alpha}$ on $M_{k_1}(\mathbb{C})\oplus \cdots \oplus M_{k_d}(\mathbb{C})$ such that $T$ has an exact factorization through $(M_n(\mathbb{C}) \otimes (M_{k_1}(\mathbb{C}) \oplus \cdots \oplus M_{k_d}(\mathbb{C})),\tau_n\otimes\tau_{\alpha})$. By using repeatedly first step or second step, there exists positive integer $k$ such that $T$ also has an exact factorization through $(M_n(\mathbb{C}) \otimes M_{k}(\mathbb{C}),\tau_n\otimes \tau_k)$. Therefore $T\in\mathcal{J}_n\subset\mathcal{K}_n$ by \cite[Corollary 3.5]{HM15} again.
\end{proof}


\subsection*{Acknowledgment}
This paper is a revised version of the master's thesis of the author. The author would like to express his heartly thanks to Professor Beno\^it Collins for carefully reading this paper and pointing out some inaccuracies. The author had a chance to visit Professor Magdalena Musat and Professor Mikael R\o rdam (in Copenhagen university) during November-December, 2016 with the support of the Top Global University project for Kyoto University. The author would appreciate their hospitality when the author was staying at Copenhagen University. In particular, thanks to their important advices, the author could advance researches related to the sections 4 and 5.


\end{document}